\newtheorem{theorem}{Theorem}
\newtheorem{definition}{Definition}
\newtheorem{lemma}{Lemma}
\newtheorem{remark}{Remark}
\date{}
\numberwithin{equation}{section}
\numberwithin{theorem}{section}
\numberwithin{lemma}{section}
\numberwithin{corollary}{section}
\numberwithin{remark}{section} \numberwithin{proposition}{section}
\numberwithin{definition}{section}
\def \Div {\mathrm{div}}
\def \R {\mathbb{R}}
\def \per{\mathrm{per}}
\begin{document}

\title[Homogenization of obstacle problems]{Homogenization of obstacle problems in Orlicz-Sobolev spaces}

\author[D. Marcon]{Diego Marcon}
\address{Department of Mathematics, Federal University of Rio Grande do Sul, Av. Bento Gon\,{c}alves 9500, 91509-900 Porto Alegre, RS, Brazil.}
\email{diego.marcon@ufrgs.br}

\author[J.F. Rodrigues]{Jos\'{e} Francisco Rodrigues}
\address{CMAFcIO, Department of Mathematics, Faculty of Sciences, University of Lisbon, Portugal.}
\email{jfrodrigues@ciencias.ulisboa.pt}

\author[R. Teymurazyan]{Rafayel Teymurazyan}
\address{CMUC, Department of Mathematics, University of Coimbra, 3001-501 Coimbra, Portugal.}
\email{rafayel@utexas.edu}

\begin{abstract}
We study the homogenization of obstacle problems in Orlicz-Sobolev spaces for a wide class of monotone operators (possibly degenerate or singular) of the $p(\cdot)$-Laplacian type. Our approach is based on the Lewy-Stampacchia inequalities, which then give access to a compactness argument. We also prove the convergence of the coincidence sets under non-degeneracy conditions.

\bigskip

\noindent \textbf{Keywords:} Homogenization, obstacle problem, Orlicz-Sobolev spaces, convergence of coincidence sets.

\bigskip

\noindent \textbf{AMS Subject Classifications MSC 2010:} 35J20, 35J62, 35J92, 35D30, 35A15.

\end{abstract}

\maketitle

\section{Introduction}

Let $\Omega\subset\R^n$ be a bounded domain and $p:\Omega\rightarrow\R$ be measurable and such that 
\begin{equation}\label{1.1}
1<\alpha\leq p(x)\leq\beta<\infty\,\,\,\textrm{ a.e. in } \Omega,
\end{equation}
where $\alpha$ and $\beta$ are constants. The following variable exponent Lebesgue space is an Orlicz space:
$$
L^{p(\cdot)}(\Omega):=\left\{u:\Omega\rightarrow\R\,\textrm{ measurable }\,\rho(u):=\int_\Omega\left|u(x)\right|^{p(x)}\,dx<\infty\right\}.
$$
This Orlicz space is a separable reflexive Banach space with the following (Luxembourg) norm:
$$
\|u\|_{L^{p(\cdot)}(\Omega)}:=\inf\left\{\lambda>0,\,\,\rho\left(\frac{|u|}{\lambda}\right)\leq1\right\}.
$$
We define an Orlicz-Sobolev space by
$$
W^{1,p(\cdot)}(\Omega):=\left\{u\in L^{p(\cdot)}(\Omega),\,\,\nabla u\in\left(L^{p(\cdot)}(\Omega)\right)^n\right\},
$$
with the norm
$$
\|u\|_{W^{1,p(\cdot)}(\Omega)}:=\|u\|_{L^{p(\cdot)}(\Omega)}+\|\nabla u\|_{L^{p(\cdot)}(\Omega)},\,\,\,\|\nabla u\|_{L^{p(\cdot)}(\Omega)}=\sum_{i=1}^n\left\|\frac{\partial u}{\partial x_i}\right\|_{L^{p(\cdot)}(\Omega)}.
$$
This Orlicz-Sobolev space is also a separable and reflexive Banach space. We also define
$$
W_0^{1,p(\cdot)}(\Omega):=\left\{u\in W_0^{1,1}(\Omega),\,\,\rho(|\nabla u|)<\infty\right\}.
$$
The latter is a Banach space endowed with the norm
$$
\|u\|_{W^{1,p(\cdot)}_0(\Omega)}:=\|\nabla u\|_{L^{p(\cdot)}(\Omega)}.
$$
In this paper we study the periodic homogenization of obstacle problems in Orlicz-Sobolev spaces. We consider
$$
a(x,\xi):\Omega\times\R^n\rightarrow\R^n
$$
to be a Carath\'{e}odory vector function, that is, we assume it is continuous with respect to $\xi$, for almost every $x\in\R^n$, and that it is measurable with respect to $x$, for every $\xi$. Moreover, the functions $a(\cdot,\xi)$ and $p(\cdot)$ are assumed to be periodic with period $1$ in each argument $x_1$, $x_2$, $\ldots$, $x_n$. We denote the periodicity cell by $Q$, i.e. $Q:=(0,1]^n$. Additionally, we assume that the following structural conditions (monotonicity, coercitivity and boundedness) hold:
\begin{equation}\label{1.3}
\begin{cases}
&\left(a(x,\xi)-a(x,\eta)\right)\cdot(\xi-\eta)>0,\,\,\textrm{ for a.e. }x,\,\,\xi\neq\eta,\\

&a(x,\xi)\cdot\xi\geq C_1\left(|\xi|^{p(x)}-1\right),\\
&\left|a(x,\xi)\right|\leq C_2\left(|\xi|^{p(x)-1}+1\right),
\end{cases}
\end{equation}
where $C_1$, $C_2>0$ are constants. For $\varepsilon>0$, we define
\begin{equation}\label{1.4}
a_\varepsilon(x,\xi):=a\left(\frac{x}{\varepsilon},\xi\right),\,\,\,x\in\Omega,\,\,\xi\in\R^n
\end{equation}
and $p_\varepsilon(x)=p(x/\varepsilon)$. The Orlicz-Sobolev spaces of periodic functions, denoted by $W^{1, p(\cdot)}_{\per}(Q)$, is defined as the set of periodic functions $u$ from $W^{1,1}_{\per}(Q)$ with
$$
\int_Q u\,dx=0\qquad\textrm{ and }\qquad\int_Q\left|\nabla u\right|^{p(x)}\,dx<\infty.
$$
For the homogenized functional defined by
\begin{equation}\label{1.30}
h(\xi):=\min_{v\in W^{1,p(\cdot)}_{\per}(Q)}\int_Q\frac{\left|\xi+\nabla v\right|^{p(x)}}{p(x)}\,dx, 
\end{equation}
we introduce also the Orlicz-Sobolev spaces
$$
W^h(\Omega):=\left\{u\in W^{1,1}(\Omega),\,\,h(\nabla u)\in L^1(\Omega)\right\},$$
$$
W_0^h(\Omega):=\left\{u\in W^{1,1}_0(\Omega),\,\,h(\nabla u)\in L^1(\Omega)\right\},\,
$$
with the norm, $\|u\|_{W_0^h(\Omega)}:=\|\nabla u\|_{L^h(\Omega)}$, and the vector Orlicz space
$$
L^h(\Omega):=\left\{\xi\in \left[L^1(\Omega)\right]^n,\,\,h(\xi)\in L^1(\Omega)\right\},
$$
normed by
$$
\|\xi\|_{L^h(\Omega)}:=\inf\left\{\lambda>0,\,\,\int_\Omega h\left(\frac{\xi}{\lambda}\right)\leq1\right\}.
$$
By the properties of $h$, as it was observed in \cite{ZP102}, we have the continuous embeddings
$$
L^{\beta}(\Omega)\subset L^h(\Omega)\subset L^{\alpha}(\Omega),
$$

Assuming that
\begin{equation}\label{1.40}
f\,\,\hbox{ and }\,\,\left(A_\varepsilon\psi_\varepsilon-f\right)^+\in L^s(\Omega),
\end{equation}
\begin{equation}\label{1.41}
\left\|\left(A_\varepsilon\psi_\varepsilon-f\right)^+\right\|_{L^s(\Omega)}\leq C,
\end{equation}
where $C>0$ is a constant independent of $\varepsilon$ and
\begin{equation}\label{1.42}
\psi_\varepsilon\in W^{1,p_\varepsilon(\cdot)}(\Omega),\,\,\psi_0\in W^{h}(\Omega),\,\,\psi_\varepsilon^+\in W_0^{1,p_\varepsilon(\cdot)}(\Omega),\,\,\psi_0^+\in W_0^{h}(\Omega),
\end{equation}
where $\alpha'=\alpha/(\alpha-1)$, $u^+$ is the positive part of $u$ and $s>\frac{n\alpha'}{n+\alpha'}$ if $\alpha<n$, $s>1$, if $\alpha=n$ and $s=1$ for $\alpha>n$, we show (Theorem \ref{t3.1}) that the unique solution $u_\varepsilon\in K_\varepsilon$ of the obstacle problem
\begin{equation}\label{1.5}
\int_\Omega a_\varepsilon(x,\nabla u_\varepsilon)\cdot\nabla(v-u_\varepsilon)\,dx\geq\int_\Omega f(v-u_\varepsilon)\,dx,\,\,\,\forall v\in K_\varepsilon,
\end{equation}
where 
$$
K_\varepsilon:=\left\{v\in W_0^{1,p_\varepsilon(\cdot)}(\Omega),\,v\geq\psi_\varepsilon\,\textrm{ a.e. in }\,\Omega\right\},
$$
converges to the unique solution $u_0\in K_0$ of the following homogenized obstacle problem
\begin{equation}\label{1.6}
\int_\Omega a_0(\nabla u_0)\cdot\nabla(v-u_0)\,dx\geq\int_\Omega f(v-u_0)\,dx,\,\,\,\forall v\in K_0,
\end{equation}
where
$$
K_0:=\left\{v\in W_0^{h}(\Omega),\,v\geq\psi_0\,\textrm{ a.e. in }\,\Omega\right\}.
$$
The homogenized operator $a_0:\R^n\rightarrow\R^n$ is given in terms of the weighted average of $a$ as in \cite{ZP102}, that is,
\begin{equation}\label{2.1}
a_0(\xi):=\int_Qa(x,\xi+\nabla v)\,dx,
\end{equation}
with $v\in W_{\per}^{1,p(\cdot)}(Q)$, such that,
$$
\int_Qa(x,\xi+\nabla v)\cdot\nabla\varphi\,dx=0,\,\,\,\forall\varphi\in W_{\per}^{1,p(\cdot)}(Q),
$$
where $Q$ is the periodicity cell. 

Note that, due to the Lavrent'ev effect, if instead of $W_{\per}^{1,p(\cdot)}(Q)$, we take $\varphi\in C_{\per}^\infty(Q)$, we may end up with a different homogenized operator, since in general the space $C_{\per}^\infty(Q)$ is not dense in $W_{\per}^{1,p(\cdot)}(\Omega)$. These homogenized operators, referred to as $W$ and $H$ solutions in \cite{ZP102}, respectively, in general may be different, but our results hold for both solutions, with minor modifications for the space framework of the $H$ solutions. Although we prefer to work with $W$ solutions, that is due to the fact that \cite[Theorem 3.1]{ZP102} (see Theorem \ref{t2.1} below) is true for both types of solutions. Observe that we do not impose any regularity assumption on $p(\cdot)$. However, in the particular case when $p$ is log-Lipschitz continuous, i.e., when for a constant $L>0$
\begin{equation*}\label{1.10}
-|p(x)-p(y)|\log|x-y|\leq L,\,\,\,\forall x,y\in\overline{\Omega}, \,\,|x-y|<1/2,
\end{equation*}
the notion of $W$ and $H$ solutions coincide (see \cite{DHHR11,FZ01}), since then the smooth functions are dense in the Orlicz-Sobolev space.

Our approach is a development of the classical methods \cite{CD99,E90} (see also \cite{S05, S06, ZP102}) combined with the Lewy-Stampacchia inequalities in the Orlicz-Sobolev framework, in accordance with \cite{RT11}, which then allows the use of a Rellich-Kondrachov compactness argument.

The result generalizes, in part, that of \cite{BM91}, which covers the case when $p$ is constant (and hence the homogenization is in usual Sobolev spaces). The latter, in turn, implies the case of $p=2$ obtained in \cite{BM82}. Nonetheless, we observe that the structural assumptions \eqref{1.3} allow us to consider a wider range of monotone operators, which cover these cases and include other interesting quasilinear operators, some of which we list below.

\begin{enumerate}[$1.$]
\item If $a(x,\xi)=|\xi|^{p(x)-2}\xi$, we deal with the obstacle problem for the $p(x)$-Laplace operator.

\item We can also consider perturbations of the $p$-Laplace ($p$ constant) and of the $p(x)$-Laplace operators, taking 
$$
a(x,\xi)=\gamma(x)|\xi|^{p-2}\xi\,\,\textrm{ and }\,\,a(x,\xi)=\gamma(x)|\xi|^{p(x)-2}\xi
$$
for any non-negative bounded periodic function $\gamma(x)$.

\item It is possible to consider functions which are essentially different from these previous ``power like'' functions. One general example can be
$$
a(x,\xi)= \gamma_1(x)|\xi|^{p(x)-1}\xi\log\left(\gamma_2(x)|\xi|+\gamma_3(x)\right),
$$
where $\gamma_3(x)$, $p(x)>1$ and $\gamma_1(x)$, $\gamma_2(x)>0$ a.e. in $\Omega$ are bounded periodic functions.
\end{enumerate}


The paper is organized as follows: in Section \ref{s2}, we state some preliminaries facts, which then serve to prove our main result in Section \ref{s3} (Theorem \ref{t3.1}). In Section \ref{s4}, we prove the convergence of the coincidence sets (Theorems \ref{t4.1} and \ref{t4.2}).
\section{Preliminaries}\label{s2}
In this section we give some preliminaries. In particular, we provide the concept of $G$-convergence of operators in our framework, as well as convergence of sets in Mosco sense. We also recall some results from  \cite{Z91} and \cite{ZP102} for future reference. We start by setting some notations, which will be used throughout the paper: $p_\varepsilon(x)=p(x/\varepsilon)$; $ \alpha'=\frac{\alpha}{\alpha-1}$; $\rightharpoonup$ denotes the weak convergence;
$$
\displaystyle A_\varepsilon u:=-\Div\big(a_\varepsilon(x,\nabla u)\big)\,\,\hbox{ and }\,\,\displaystyle A_0u:=-\Div\big(a_0(\nabla u)\big),
$$
where $a_\varepsilon$ is defined by \eqref{1.4}, and $a_0$ is defined by \eqref{2.1}. Next, we define the notion of $G$-convergence of $a_\varepsilon$ to $a_0$. Observe, that most definitions of $G$-convergence that can be found in the literature (see, for example, \cite{B11,BD75,CDD90,P97}), allow $a_0$ to depend on $x$ as well, just as $a_{\varepsilon}$ depends. However, in some particular cases, more information can be said about the limiting operator. One example is that of operators with rapidly oscillating ``coefficients''. Since our assumptions ensure that $a(x,\xi)$ and $p(\cdot)$ are periodic with respect to $x$ in each of the arguments $x_1$, $x_2$, $\ldots$, $x_n$, there is no loss in generality to impose $a_0$ to be independent of $x$ in the definition of $G$-convergence, which is more relevant for our purposes.
\begin{definition}\label{d2.1}
Consider $a_{\varepsilon}:\Omega\times\R^n\rightarrow\R^n$ and $a_0:\R^n\rightarrow\R^n$ as above. We say that $a_\varepsilon$ $G$-converges to $a_0$ when, considering the unique solution $u_\varepsilon \in W_0^{1,p_\varepsilon(\cdot)}(\Omega)$ of 
$$
-\Div\left(a_\varepsilon(x,\nabla u_\varepsilon)\right)=f,\,\,f\in W^{-1,\alpha'}_0(\Omega)\textrm{ in }\,\mathcal{D}'(\Omega)
$$
and $u_0\in W_0^h(\Omega)$ the unique solution of 
$$
-\Div\left(a_0(\nabla u_0)\right)=f\,\,\textrm{ in }\,\mathcal{D}'(\Omega),
$$
there holds:
\begin{enumerate}[$(1)$]
	\item $u_\varepsilon\rightharpoonup u_0$ in $W_0^{1,\alpha}(\Omega)$, as $\varepsilon\rightarrow0$;
	\item $\displaystyle a_\varepsilon(x,\nabla u_\varepsilon)\rightharpoonup a_0(\nabla u_0)$ in $\left(L^{\beta'}(\Omega)\right)^n$, as $\varepsilon\rightarrow0$.
\end{enumerate}
\end{definition}
Note that the choice of $s$ in \eqref{1.40} guarantees, in particular, $f\in W^{-1,\alpha'}(\Omega)$. Additionally, $a(x,\xi)$ is assumed to be continuous with respect to $\xi$, for almost every $x\in\R^n$.

Next, we state a theorem from \cite[Theorem 3.1]{ZP102} that insures the $G$-convergence of $a_\varepsilon$ to a function $a_0$, as $\varepsilon\rightarrow0$, given explicitly in terms of $a$. Its proof is based on a compensated compactness argument from \cite{ZP101,ZP102}, which, in the case of $p(\cdot)=$ constant, resembles the well known result of Tartar-Murat (see \cite{M78}).
\begin{theorem}\label{t2.1}
Let $a(x,\xi)$ be a Carath\'{e}odory vector function, which is periodic with respect to $x$ in each argument and satisfy \eqref{1.3}. Let also $p$ be periodic, measurable and satisfy \eqref{1.1}. If structural conditions \eqref{1.3} hold, then $a_\varepsilon$ $G$-converges to $a_0$, where $a_0$ is defined by \eqref{2.1}. Moreover,
$$
\int_\Omega a_\varepsilon(x,\nabla u_\varepsilon)\cdot\nabla u_\varepsilon\,dx\rightarrow\int_\Omega a_0(\nabla u_0)\cdot\nabla u_0\,dx,
$$
as $\varepsilon\rightarrow0$.
\end{theorem}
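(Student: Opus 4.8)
The plan is to follow the classical scheme of periodic homogenization for monotone operators --- Tartar's oscillating test functions combined with Minty's monotonicity trick --- carried out in the variable-exponent Orlicz framework of \cite{ZP101, ZP102}. Fix $f\in W^{-1,\alpha'}(\Omega)$ and let $u_\varepsilon$ be the solution of $-\Div(a_\varepsilon(x,\nabla u_\varepsilon))=f$. First I would establish uniform a priori bounds: testing the equation with $u_\varepsilon$ and using the coercivity in \eqref{1.3} together with $\alpha\le p_\varepsilon\le\beta$ and $|\Omega|<\infty$ yields $\int_\Omega|\nabla u_\varepsilon|^{p_\varepsilon(x)}\,dx\le C$, hence (since $t^\alpha\le 1+t^{p_\varepsilon(x)}$) $\|u_\varepsilon\|_{W_0^{1,\alpha}(\Omega)}\le C$; the growth bound in \eqref{1.3} then gives a uniform bound for the conjugate modular of $a_\varepsilon(x,\nabla u_\varepsilon)$, and therefore $\|a_\varepsilon(x,\nabla u_\varepsilon)\|_{(L^{\beta'}(\Omega))^n}\le C$. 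Extracting subsequences, $u_\varepsilon\weakly u_*$ in $W_0^{1,\alpha}(\Omega)$ and $a_\varepsilon(x,\nabla u_\varepsilon)\weakly\chi$ in $(L^{\beta'}(\Omega))^n$; passing to the limit in $\mathcal{D}'(\Omega)$ gives $-\Div\chi=f$. By strict monotonicity the limit problem has at most one solution, so once we identify $\chi=a_0(\nabla u_*)$ it will follow that $u_*=u_0$ and the whole family converges.

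The core step is the identification $\chi=a_0(\nabla u_*)$. For each fixed $\xi\in\R^n$, let $v_\xi\in W^{1,p(\cdot)}_{\per}(Q)$ be the corrector solving the cell problem in \eqref{2.1}, and set $w_\varepsilon^\xi(x):=\xi\cdot x+\varepsilon\, v_\xi(x/\varepsilon)$, so that $\nabla w_\varepsilon^\xi(x)=\xi+\nabla v_\xi(x/\varepsilon)$. By periodicity, $\nabla w_\varepsilon^\xi\weakly\xi$ and $a_\varepsilon(x,\nabla w_\varepsilon^\xi)=a(x/\varepsilon,\xi+\nabla v_\xi(x/\varepsilon))\weakly a_0(\xi)$ (weak convergence of oscillating periodic fields to their means), while the cell equation gives $\Div(a_\varepsilon(x,\nabla w_\varepsilon^\xi))=0$. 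Next I would invoke a div--curl (compensated compactness) lemma adapted to these Orlicz spaces, as in \cite{ZP101, ZP102}, to pass to the limit in the scalar products: $a_\varepsilon(x,\nabla u_\varepsilon)\cdot\nabla w_\varepsilon^\xi\weakly\chi\cdot\xi$ and $a_\varepsilon(x,\nabla w_\varepsilon^\xi)\cdot\nabla u_\varepsilon\weakly a_0(\xi)\cdot\nabla u_*$ in $\mathcal{D}'(\Omega)$, using that the divergence of the first factor is compact (equal to $-f$, resp. $0$) and the second factor is a gradient.

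Combined with the energy identity $\int_\Omega a_\varepsilon(x,\nabla u_\varepsilon)\cdot\nabla u_\varepsilon\,dx=\langle f,u_\varepsilon\rangle\to\langle f,u_*\rangle=\int_\Omega\chi\cdot\nabla u_*\,dx$, the monotonicity inequality $\int_\Omega\varphi\,\bigl(a_\varepsilon(x,\nabla u_\varepsilon)-a_\varepsilon(x,\nabla w_\varepsilon^\xi)\bigr)\cdot(\nabla u_\varepsilon-\nabla w_\varepsilon^\xi)\,dx\ge 0$, valid for every $0\le\varphi\in C_c^\infty(\Omega)$, passes to the limit and yields $\int_\Omega\varphi\,(\chi-a_0(\xi))\cdot(\nabla u_*-\xi)\,dx\ge 0$. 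Since $\varphi\ge 0$ and $\xi$ are arbitrary, this gives $(\chi(x)-a_0(\xi))\cdot(\nabla u_*(x)-\xi)\ge 0$ a.e.\ for all $\xi$ in a countable dense set, and by continuity of $a_0$ for all $\xi\in\R^n$; Minty's device (take $\xi=\nabla u_*(x)\pm t e$, divide by $t$, let $t\to 0^+$) then forces $\chi(x)=a_0(\nabla u_*(x))$ a.e. Once $\chi=a_0(\nabla u_*)$ and $u_*=u_0$ are established for every subsequence, the full family $(u_\varepsilon)$ converges, giving properties $(1)$ and $(2)$ of Definition \ref{d2.1}; and the energy convergence $\int_\Omega a_\varepsilon(x,\nabla u_\varepsilon)\cdot\nabla u_\varepsilon\,dx\to\int_\Omega a_0(\nabla u_0)\cdot\nabla u_0\,dx$ is immediate from $\int_\Omega a_\varepsilon(x,\nabla u_\varepsilon)\cdot\nabla u_\varepsilon\,dx=\langle f,u_\varepsilon\rangle$, the weak convergence $u_\varepsilon\weakly u_0$ in $W_0^{1,\alpha}(\Omega)$, and the identity $\langle f,u_0\rangle=\int_\Omega a_0(\nabla u_0)\cdot\nabla u_0\,dx$.

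I expect the main obstacle to be the compensated compactness / div--curl step: unlike in the constant-exponent case (where it reduces to the Tartar--Murat lemma, cf.\ \cite{M78}), here one must control products of functions living in variable-exponent spaces and rule out the Lavrentiev phenomenon, which is precisely why the correctors are taken in $W^{1,p(\cdot)}_{\per}(Q)$ rather than in $C^\infty_{\per}(Q)$, and why the uniform equi-integrability furnished by the lower exponent $\alpha$ together with the embeddings $L^\beta(\Omega)\subset L^h(\Omega)\subset L^\alpha(\Omega)$ is used throughout.
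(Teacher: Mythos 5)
Your proposal is correct in outline and follows essentially the same route as the proof the paper relies on: Theorem \ref{t2.1} is imported verbatim from \cite[Theorem 3.1]{ZP102}, and the argument there is exactly the Tartar oscillating-test-function scheme (cell-problem correctors, a priori modular bounds, the Zhikov--Pastukhova compensated compactness/div--curl lemma, and Minty's trick) that you describe. The one step you rightly flag as delicate --- passing to the limit in the products when the flux is only modular-bounded in $L^{p_\varepsilon'(\cdot)}$ and the gradient in $L^{p_\varepsilon(\cdot)}$, so that the product is controlled only in $L^1$ via the pointwise Young inequality rather than by a fixed-exponent duality --- is precisely the content of the compensated compactness principle of \cite{ZP101,ZP102} that both you and the paper invoke.
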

As it is shown in \cite{ZP102}, the vector function $a_0(\xi)$ is strictly monotone, i.e.,
$$
\left(a_0(\xi)-a_0(\eta)\right)\cdot(\xi-\eta)>0,\,\,\,\xi\neq\eta,
$$
and coercive, that is, 
$$
a_0(\xi)\cdot\xi>c_0(h(\xi)-1),
$$
where $c_0>0$ is a constant, and the homogenized functional $h(\xi)$ is defined by \eqref{1.30}. Moreover, $h$ satisfies the so-called $\Delta_2$ condition, \cite[Proposition 2.1]{ZP102}, which implies that the Orlicz space $L^h(\Omega)$ is reflexive. As it is observed in \cite{ZP102}, $h(\xi)$ being defined by \eqref{1.30}, is convex on $\R^n$ and satisfies the following two-sided estimate:
$$
c_1|\xi|^\alpha-1\leq h(\xi)\leq c_2|\xi|^\beta+1,
$$
for a $c_1>0$ constant. As a consequence, we have 
$$
W_0^{1,\beta}(\Omega)\subset W_0^h(\Omega)\subset W_0^{1,\alpha}(\Omega),
$$
which implies that 
$$
K_0\subset W_0^{1,\alpha}(\Omega).
$$
The following result is from \cite{Z91}, and it provides more information on the homogenized functional.
\begin{lemma}\label{l2.1}
If $u_\varepsilon$ is a sequence uniformly bounded in $W_0^{1,p_\varepsilon(\cdot)}(\Omega)$, such that, $u_\varepsilon\rightharpoonup u_0$ in $W_0^{1,\alpha}(\Omega)$ as $\varepsilon\rightarrow0$, then $h(\nabla u_0)\in L^1(\Omega)$.
\end{lemma}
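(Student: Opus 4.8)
The plan is to read the statement as the lower-semicontinuity (``$\Gamma$-$\liminf$'') part of the homogenization of the variable-exponent Dirichlet energy $F_\varepsilon(u):=\int_\Omega \frac{|\nabla u|^{p_\varepsilon(x)}}{p_\varepsilon(x)}\,dx$, whose homogenized density is exactly $h$. One should note first that the naive route---bounding $\int_\Omega h(\nabla u_\varepsilon)\,dx$ and using weak lower semicontinuity of the convex functional $v\mapsto\int_\Omega h(\nabla v)\,dx$ on $W_0^{1,\alpha}(\Omega)$---is not available, since $h$ may grow like $|\xi|^\beta$ whereas $u_\varepsilon$ is controlled only in the $p_\varepsilon(\cdot)$-modular, so $\int_\Omega h(\nabla u_\varepsilon)\,dx$ need not be bounded; the periodic structure has to be used.

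I would argue by two-scale convergence. First, the uniform bound in $W_0^{1,p_\varepsilon(\cdot)}(\Omega)$ passes to the modular, $\int_\Omega|\nabla u_\varepsilon|^{p_\varepsilon(x)}\,dx\le M$, so $F_\varepsilon(u_\varepsilon)\le M/\alpha$ uniformly in $\varepsilon$. By the two-scale compactness theorem in the variable-exponent setting, along a subsequence $u_\varepsilon\to u_0$ and $\nabla u_\varepsilon\stackrel{2}{\rightharpoonup}\nabla u_0(x)+\nabla_y u_1(x,y)$ for some $u_1(x,\cdot)\in W^{1,1}_{\per}(Q)$, and from the modular bound together with two-scale (hence weak) lower semicontinuity of the convex integrand $(y,\xi)\mapsto\frac{|\xi|^{p(y)}}{p(y)}$,
$$\int_\Omega\int_Q\frac{\left|\nabla u_0(x)+\nabla_y u_1(x,y)\right|^{p(y)}}{p(y)}\,dy\,dx\le\liminf_{\varepsilon\to0}\int_\Omega\frac{|\nabla u_\varepsilon|^{p_\varepsilon(x)}}{p_\varepsilon(x)}\,dx\le\frac{M}{\alpha}.$$
In particular the left-hand integrand is finite for a.e. $x$, so $\nabla_y u_1(x,\cdot)\in\big(L^{p(\cdot)}(Q)\big)^n$ and (after normalizing its mean) $u_1(x,\cdot)$ is an admissible competitor in the minimization \eqref{1.30} defining $h(\nabla u_0(x))$. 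Hence, for a.e. $x$,
$$h(\nabla u_0(x))\le\int_Q\frac{\left|\nabla u_0(x)+\nabla_y u_1(x,y)\right|^{p(y)}}{p(y)}\,dy,$$
and integrating over $\Omega$ yields $\int_\Omega h(\nabla u_0)\,dx\le M/\alpha<\infty$, that is $h(\nabla u_0)\in L^1(\Omega)$.

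The main obstacle is setting up the two-scale machinery in the variable-exponent/Orlicz framework: the two-scale compactness of $\{u_\varepsilon\}$ together with the identification of the two-scale limit of $\{\nabla u_\varepsilon\}$ as $\nabla u_0+\nabla_y u_1$ (which rests on $\nabla u_\varepsilon$ being curl-free), and the two-scale lower semicontinuity of the convex modular integrand; both must be handled with the same care about the Lavrentiev phenomenon that the paper flags for the cell problems. An equivalent route, more in the spirit of Theorem \ref{t2.1}, is convex duality: write $\int_\Omega h(\nabla u_0)\,dx=\sup_g\big(\int_\Omega \nabla u_0\cdot g\,dx-\int_\Omega h^*(g)\,dx\big)$ over piecewise constant $g$, test $\nabla u_\varepsilon$ against the rescaled divergence-free dual cell correctors associated with the values of $g$, and pass to the limit via the modular bound, the Riemann--Lebesgue lemma, and the compensated-compactness (div--curl) argument of \cite{ZP101,ZP102}; or one may simply invoke the $\Gamma$-convergence of $F_\varepsilon$ to $v\mapsto\int_\Omega h(\nabla v)\,dx$ due to Zhikov \cite{Z91}, of which this lemma is the $\Gamma$-$\liminf$ inequality.
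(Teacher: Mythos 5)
The paper does not prove Lemma~\ref{l2.1}; it is quoted as a result from Zhikov \cite{Z91}, so there is no internal argument to compare your attempt against. Your two-scale convergence sketch is nonetheless a correct, essentially self-contained route to the statement. The framing observations are right: weak lower semicontinuity of $v\mapsto\int_\Omega h(\nabla v)\,dx$ on $W_0^{1,\alpha}(\Omega)$ is indeed unavailable, since $h$ may grow like $|\xi|^\beta$ while the uniform $W_0^{1,p_\varepsilon(\cdot)}(\Omega)$ bound only controls the $p_\varepsilon(\cdot)$-modular of $\nabla u_\varepsilon$, so the periodic microstructure has to be invoked. The reduction to the cell formula~\eqref{1.30} through the two-scale decomposition $\nabla u_\varepsilon\rightharpoonup\nabla u_0+\nabla_y u_1$ and the two-scale lower semicontinuity of the nonnegative convex modular, followed by the observation that $u_1(x,\cdot)$ is an admissible cell competitor for a.e.\ $x$, is precisely the $\Gamma$-$\liminf$ half of the homogenization of the variable-exponent Dirichlet energy. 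Zhikov's original argument is along the lines of your sketched second alternative (convex duality, dual correctors, Riemann--Lebesgue and compensated compactness), which is also the machinery behind Theorem~\ref{t2.1} in this paper; the two viewpoints are essentially equivalent here.

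The technical points you flag as ``the main obstacle'' are genuine but do not hide a trap. Two-scale compactness and the gradient-plus-corrector identification hold already in $L^\alpha$, where $\nabla u_\varepsilon$ is uniformly bounded; the enhanced $L^{p(\cdot)}(Q)$ integrability of $\nabla_y u_1(x,\cdot)$ is then forced a posteriori by the very $\liminf$ inequality you derive, exactly as you note. The two-scale lower semicontinuity of the modular integrand follows by testing convexity against bounded, periodic-in-$y$ dual fields $g(x,y)$, passing to the limit via two-scale convergence and the Riemann--Lebesgue lemma applied to $f^*(x/\varepsilon,g(x,x/\varepsilon))$, and then saturating the pointwise Legendre--Fenchel supremum by monotone truncation. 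In particular, the Lavrent'ev phenomenon you worry about threatens only the $\Gamma$-$\limsup$ (recovery sequence) direction, not this lower bound, so your caution there can be safely set aside.
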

Observe that Lemma \ref{l2.1} guarantees that, within $G$-convergence, the weak limits of $u_\varepsilon$ in $W_0^{1,\alpha}(\Omega)$ belong to $W_0^h(\Omega)$, and therefore, if also $u_\varepsilon\in K_\varepsilon$ then $u_0\in K_0$.

In order to state our main result, we will also need to redefine the Mosco convergence of sets.
\begin{definition}\label{d2.2}
The sequence of closed convex sets $K_\varepsilon\subset W_0^{1,p_\varepsilon(\cdot)}(\Omega)$, is said to converge to the set $K_0\subset W_0^{h}(\Omega)$ in the Mosco sense, if
\begin{itemize}
\item for any $v_0\in K_0$ there exists a sequence $v_\varepsilon\in K_\varepsilon$, such that, $v_\varepsilon\rightarrow v_0$ in $W_0^{1,\alpha}(\Omega)$;
\item weak limits in $W_0^{1,\alpha}(\Omega)$ of any sequence of elements in $K_\varepsilon$, that is uniformly bounded in $W_0^{1,p_\varepsilon(\cdot)}(\Omega)$, belong to $K_0$.
\end{itemize}
\end{definition}
\begin{remark}\label{r2.1}
Since $W_0^{1,p_\varepsilon(\cdot)}(\Omega)$ is continuously embedded into $W_0^{1,\alpha}(\Omega)$ (see, for example, \cite{DHHR11}), then $\psi_\varepsilon\rightarrow\psi_0$ in $W^{1,\beta}(\Omega)$ provides $K_\varepsilon\rightarrow K_0$ in the Mosco sense, where $K_\varepsilon$ and $K_0$ are as in \eqref{1.5} and \eqref{1.6} respectively. \end{remark}
\smallskip
\section{Homogenization of the obstacle problem}\label{s3}
We are now ready to prove our main result, which states as follows. 
\begin{theorem}\label{t3.1}
Let $a(x,\xi)$ be a Carath\'{e}odory vector function satisfying \eqref{1.3} and periodic with respect to $x$ in each argument. Let $p(\cdot)$ be periodic, measurable and satisfying \eqref{1.1}. Assume further that \eqref{1.40}-\eqref{1.42} hold. If $K_\varepsilon\rightarrow K_0$ in the Mosco sense, then the unique solution of \eqref{1.5} converges weakly in $W_0^{1,\alpha}(\Omega)$, as $\varepsilon\rightarrow0$, to the unique solution of \eqref{1.6}, where $a_0$ is given by \eqref{2.1}.
\end{theorem}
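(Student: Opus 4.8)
The plan is to use the Lewy--Stampacchia inequalities to rewrite \eqref{1.5} as an equation whose right-hand side is compact in $W^{-1,\alpha'}(\Omega)$, then invoke the $G$-convergence of Theorem \ref{t2.1} to pass to the limit in the nonlinear term, and finally recover the unilateral constraint by testing against the Mosco recovery sequences. \textbf{Step 1 (uniform bounds).} Following \cite{RT11}, the Lewy--Stampacchia inequalities applied to the solution $u_\varepsilon$ of \eqref{1.5} give $f\le A_\varepsilon u_\varepsilon\le f\vee A_\varepsilon\psi_\varepsilon$, that is $A_\varepsilon u_\varepsilon=f+\mu_\varepsilon$ with $0\le\mu_\varepsilon\le(A_\varepsilon\psi_\varepsilon-f)^+$; by \eqref{1.40}--\eqref{1.41} the family $\{f+\mu_\varepsilon\}$ is bounded in $L^s(\Omega)$. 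Testing the equation $-\Div\big(a_\varepsilon(x,\nabla u_\varepsilon)\big)=f+\mu_\varepsilon$ with $u_\varepsilon$, using the coercivity in \eqref{1.3}, the Sobolev--Poincar\'e inequality in $W_0^{1,\alpha}(\Omega)$, the bound $|\nabla u_\varepsilon|^\alpha\le|\nabla u_\varepsilon|^{p_\varepsilon(\cdot)}+1$, and the embedding $L^s(\Omega)\hookrightarrow W^{-1,\alpha'}(\Omega)$ (valid in the stated range of $s$), a standard absorption argument gives $\int_\Omega|\nabla u_\varepsilon|^{p_\varepsilon(x)}\,dx\le C$, hence $\|u_\varepsilon\|_{W_0^{1,p_\varepsilon(\cdot)}(\Omega)}\le C$ and a fortiori $\|u_\varepsilon\|_{W_0^{1,\alpha}(\Omega)}\le C$; the growth bound in \eqref{1.3} then shows that the fluxes $a_\varepsilon(x,\nabla u_\varepsilon)$ are bounded in $\big(L^{\beta'}(\Omega)\big)^n$.

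\textbf{Step 2 (limit of the equation).} Along a subsequence: $u_\varepsilon\rightharpoonup u_0$ in $W_0^{1,\alpha}(\Omega)$; by Rellich--Kondrachov $u_\varepsilon\to u_0$ strongly in $L^{s'}(\Omega)$; $a_\varepsilon(x,\nabla u_\varepsilon)\rightharpoonup\chi$ in $\big(L^{\beta'}(\Omega)\big)^n$; and $\{\mu_\varepsilon\}$, bounded in $L^s(\Omega)$, is --- by the choice of $s$ --- precompact in $W^{-1,\alpha'}(\Omega)$, so that, refining the subsequence, $f+\mu_\varepsilon\to f+\mu_0$ strongly in $W^{-1,\alpha'}(\Omega)$. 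By Lemma \ref{l2.1}, $u_0\in W_0^h(\Omega)$, and the second condition in the Mosco convergence gives $u_0\in K_0$. Applying Theorem \ref{t2.1} --- more precisely, the compensated-compactness argument underlying it, which is insensitive to replacing the fixed datum by the strongly convergent sequence $f+\mu_\varepsilon$ --- to the equation $-\Div\big(a_\varepsilon(x,\nabla u_\varepsilon)\big)=f+\mu_\varepsilon$ identifies $\chi=a_0(\nabla u_0)$, so that $-\Div\big(a_0(\nabla u_0)\big)=f+\mu_0$ in $\mathcal D'(\Omega)$; in particular $\mu_0\in W^{-1,\alpha'}(\Omega)$, and the growth of $a_0$ together with $u_0\in W_0^h(\Omega)$ places $a_0(\nabla u_0)$ in the Orlicz space conjugate to $L^h(\Omega)$, so that the terms in \eqref{1.6} are meaningful.

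\textbf{Step 3 (the constraint, and conclusion).} Fix $v_0\in K_0$. By the first condition in the Mosco convergence there is $v_\varepsilon\in K_\varepsilon$ with $v_\varepsilon\to v_0$ strongly in $W_0^{1,\alpha}(\Omega)$. Taking $v_\varepsilon$ as test function in \eqref{1.5} and integrating by parts,
\[
0\le\int_\Omega a_\varepsilon(x,\nabla u_\varepsilon)\cdot\nabla(v_\varepsilon-u_\varepsilon)\,dx-\int_\Omega f\,(v_\varepsilon-u_\varepsilon)\,dx=\langle\mu_\varepsilon,\,v_\varepsilon-u_\varepsilon\rangle,
\]
the pairing being that of $W^{-1,\alpha'}(\Omega)$ with $W_0^{1,\alpha}(\Omega)$. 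Since $\mu_\varepsilon\to\mu_0$ strongly in $W^{-1,\alpha'}(\Omega)$ while $v_\varepsilon-u_\varepsilon\rightharpoonup v_0-u_0$ weakly in $W_0^{1,\alpha}(\Omega)$, passing to the limit gives $\langle\mu_0,\,v_0-u_0\rangle\ge0$, that is,
\[
\int_\Omega a_0(\nabla u_0)\cdot\nabla(v_0-u_0)\,dx\ge\int_\Omega f\,(v_0-u_0)\,dx.
\]
As $v_0\in K_0$ was arbitrary and $u_0\in K_0$, the limit $u_0$ solves \eqref{1.6}; since $a_0$ is strictly monotone, \eqref{1.6} has a unique solution, so $u_0$ is independent of the subsequence and the full family $u_\varepsilon$ converges weakly in $W_0^{1,\alpha}(\Omega)$ to $u_0$.

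\textbf{Main obstacle.} The heart of the matter is Step 2: the monotone term $a_\varepsilon(x,\nabla u_\varepsilon)$ does not pass to the limit under weak convergence alone, and the classical homogenization device of oscillating test functions is unavailable here because the admissible functions in \eqref{1.5} vary with $\varepsilon$ through the constraint $v\ge\psi_\varepsilon$. The Lewy--Stampacchia inequalities resolve this by converting \eqref{1.5} into an equation whose right-hand side is not merely bounded but --- thanks to \eqref{1.41} and the sharp range of $s$ --- \emph{precompact} in $W^{-1,\alpha'}(\Omega)$, which is precisely the setting in which the $G$-convergence of Theorem \ref{t2.1} applies. Two secondary technical points require attention: the possible degeneracy or singularity of the operators and the absence of any regularity assumption on $p(\cdot)$, absorbed by the monotone-operator and $\Delta_2$ machinery of \cite{Z91,ZP102,RT11}; and the identification of the appropriate Orlicz spaces for the limiting problem --- connected to the $W$ versus $H$ solution distinction recalled in the Introduction --- which again rests on the $\Delta_2$ property of the homogenized functional $h$.
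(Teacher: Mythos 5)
Your overall architecture is right and your Steps 1 and 3 are essentially the paper's Steps 1, 2, and 5, but Step 2 contains a genuine gap that happens to be exactly the hard part of the theorem, which the paper spends its Steps 3 and 4 proving.

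You assert that the identification $\chi = a_0(\nabla u_0)$ follows from ``the compensated-compactness argument underlying Theorem \ref{t2.1}, which is insensitive to replacing the fixed datum by the strongly convergent sequence $f+\mu_\varepsilon$.'' As stated, Theorem \ref{t2.1} concerns the Dirichlet problem with a \emph{fixed} $f\in W^{-1,\alpha'}(\Omega)$; what you are invoking is the stronger, ``sequential'' form of $G$-convergence: that whenever $u_\varepsilon\rightharpoonup u_0$ in $W_0^{1,\alpha}(\Omega)$, $a_\varepsilon(x,\nabla u_\varepsilon)\rightharpoonup\chi$ in $(L^{\beta'}(\Omega))^n$, and $-\Div a_\varepsilon(x,\nabla u_\varepsilon)$ converges strongly in $W^{-1,\alpha'}(\Omega)$, one may conclude $\chi=a_0(\nabla u_0)$. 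In the classical constant-exponent setting this equivalence is standard (it is essentially how Boccardo \cite{B11} argues), but in the Orlicz--Sobolev framework it is not a citable black box: the div--curl lemma one needs pairs $\sigma_\varepsilon\in (L^{\beta'})^n$ against gradients only in $(L^\alpha)^n$, with $\tfrac{1}{\beta'}+\tfrac{1}{\alpha}$ possibly exceeding $1$, so the naive compensated-compactness argument does not close and one must instead exploit the uniform $L^{p_\varepsilon(\cdot)}$ bounds via the Zhikov--Pastukhova machinery. The paper's Step 3 is precisely the work you are skipping: it fixes $w_0\in\mathcal D(\Omega)$, introduces the oscillating correctors $w_\varepsilon$ solving \eqref{3.8}, localizes with $\varphi$, decomposes the monotonicity inequality into the four terms $I_1,\dots,I_4$, and crucially uses the Mosco recovery sequence $\bar u_\varepsilon$ together with the variational inequality (via the test function $\varphi\bar u_\varepsilon+(1-\varphi)u_\varepsilon$) to control $I_1$; Minty's trick then delivers $\sigma^*=a_0(\nabla u^*)$. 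Step 4 (lower semicontinuity of the energy) is a further ingredient you do not reproduce and which the paper needs in order to pass to the limit in the variational inequality itself.

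So: your Step 1 is correct (it is the paper's Steps 1--2), your Step 3 is a clean and slightly more streamlined version of the paper's Step 5 once Step 2 is granted, but Step 2 as written is a claim without a proof, and it is the core of the theorem. To repair it you would either have to prove the ``strongly convergent RHS'' form of $G$-convergence in the Orlicz--Sobolev setting --- which would amount to reproducing the paper's Minty/corrector argument anyway --- or cite a reference in which that exact statement is established for variable-exponent operators.
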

\begin{proof}
We divide the proof into five steps.
\smallskip

\noindent\textbf{Step 1} (\textit{Apriori estimates}). Existence and uniqueness of the solution of \eqref{1.5} (and \eqref{1.6}) is a classical result (see, for instance, \cite{CLRT14,R91,RT11}). As in the proof of \cite[Theorem 2.3]{BM91} (see also \cite[page 145]{R91}), the coercitivity and boundedness assumptions from \eqref{1.3} 
imply that $u_\varepsilon$ is bounded in $W_0^{1,p_\varepsilon(\cdot)}(\Omega)$ by a constant depending only from $C_1$, $C_2$ but independent of $\varepsilon$. For the details we refer the reader to \cite{FZ03}. As a consequence we obtain that $u_\varepsilon$ is bounded also in $W_0^{1,\alpha}(\Omega)$, since $W_0^{1,p_\varepsilon(\cdot)}(\Omega)\subset W_0^{1,\alpha}(\Omega)$. Set
\begin{equation}\label{3.1}
\sigma_\varepsilon:=a_\varepsilon(x,\nabla u_\varepsilon),\qquad \mu_\varepsilon:=-\Div\left(a_\varepsilon(x,\nabla u_\varepsilon)\right)-f.
\end{equation}
The boundedness condition from \eqref{1.3} implies that $\sigma_\varepsilon$ and $\mu_\varepsilon$ are bounded (see \cite{BM91,ZP102}), therefore we can extract weakly convergent subsequence (still denoted by $\varepsilon$) from each one of them. Thus, there exist $u^*$, $\sigma^*$, $\mu^*$ such that
\begin{equation}\label{3.2}
u_\varepsilon\rightharpoonup u^*\,\,\,\textrm{ in }W_0^{1,\alpha}(\Omega)\,\textrm{ and } u_\varepsilon\rightarrow u^*\,\,\textrm{ in }\,\,L^{\alpha}(\Omega),
\end{equation}
\begin{equation}\label{3.3}
\sigma_\varepsilon\rightharpoonup\sigma^*\,\,\,\textrm{ in }\left(L^{\beta'}(\Omega)\right)^n,
\end{equation}
\begin{equation}\label{3.4}
\mu_\varepsilon\rightharpoonup\mu^*\,\,\,\textrm{ in }W^{-1,\beta'}(\Omega).
\end{equation}
Note that
\begin{equation}\label{3.5}
\mu^*=-\Div\sigma^*-f.
\end{equation}
Moreover, using Lemma 2.1 and since $K_\varepsilon\rightarrow K_0$ in the Mosco sense, then\begin{equation}\label{3.6}
u^*\in K_0.
\end{equation}
\noindent\textbf{Step 2} (\textit{Compactness}). Note that our assumptions provide the Lewy-Stampacchia inequalities (see \cite{RT11}), that is, we have
$$
f\leq f+\mu_\varepsilon\leq(A_\varepsilon\psi_\varepsilon-f)^++f,
$$
which implies, by a Rellich-Kondrachov compactness argument,
\begin{equation}\label{3.7}
\mu_\varepsilon\rightarrow\mu^*\,\,\,\textrm{ in }W^{-1,\alpha'}(\Omega).
\end{equation}
\noindent\textbf{Step 3.} In this step we prove that $\sigma^*=a_0(\nabla u^*)$, where $a_0$ is defined by \eqref{2.1}. To see this, let $w_0\in\mathcal{D}(\Omega)$ and $w_\varepsilon\in W_0^{1,p_\varepsilon(\cdot)}(\Omega)$ be the unique solution of
\begin{equation}\label{3.8}
\Div\left(a_\varepsilon(x,\nabla w_\varepsilon)\right)=\Div\left(a_0(\nabla w_0)\right)\,\,\,\textrm{ in }\mathcal{D}'(\Omega).
\end{equation}
From Theorem \ref{t2.1}, we have that $a_\varepsilon$ $G$-converges to $a_0$, as $\varepsilon\rightarrow0$, where $a_0(\xi)$ is defined by \eqref{2.1}. In particular,
\begin{equation}\label{3.9}
\begin{cases}
&w_\varepsilon\rightharpoonup w_0\,\,\,\textrm{ in } W^{1,\alpha}(\Omega)\\
& a_\varepsilon(x,\nabla w_\varepsilon)\rightharpoonup a_0(\nabla w_0)\,\,\,\textrm{ in }\left(L^{\beta'}(\Omega)\right)^n.
\end{cases}
\end{equation}
Fix now $\varphi$ such that
\begin{equation}\label{3.10}
\varphi\in\mathcal{D}(\Omega),\,\,\,0\leq\varphi\leq1.
\end{equation}
From the monotonicity of $a_\varepsilon$ one has
\begin{equation}\label{3.11}
\int_\Omega\varphi\left(a_\varepsilon(x,\nabla u_\varepsilon)-a_\varepsilon(x,\nabla w_\varepsilon)\right)\cdot(\nabla u_\varepsilon-\nabla w_\varepsilon)\,dx\geq0.
\end{equation}
Since $u^*\in K_0$, and $K_\varepsilon\rightarrow K_0$ in the Mosco sense, there exists a sequence $\bar{u}_\varepsilon$, such that,
\begin{equation}\label{3.12}
\bar{u}_\varepsilon\in K_\varepsilon\,\,\,\textrm{ and }\,\,\,\bar{u}_\varepsilon\rightarrow u^*\,\,\,\textrm{ in }W_0^{1,\alpha}(\Omega).
\end{equation}
Next, we write \eqref{3.11} as
\begin{eqnarray}\label{3.13}
  \int_\Omega\varphi\sigma_\varepsilon\cdot(\nabla u_\varepsilon-\nabla\bar{u}_\varepsilon)  &+& \int_\Omega\varphi\sigma_\varepsilon\cdot\nabla\bar{u}_\varepsilon-\int_\Omega\varphi\sigma_\varepsilon\cdot\nabla w_\varepsilon\nonumber\\
  &-& \int_\Omega\varphi a_\varepsilon(x,\nabla w_\varepsilon)\cdot\nabla(u_\varepsilon-w_\varepsilon)\nonumber\\
  &:=&I_1+I_2+I_3+I_4.
\end{eqnarray}
Since $0\leq\varphi\leq1$ on $\Omega$, and $K_\varepsilon$ is convex, then the function $v=\varphi\bar{u}_\varepsilon+(1-\varphi)u_\varepsilon$ can be used as a test function in \eqref{1.5}, which gives
\begin{equation}\label{3.14}
\int_\Omega\sigma_\varepsilon\cdot\nabla\big(\varphi(\bar{u}_\varepsilon-u_\varepsilon)\big)\geq\int_\Omega f\varphi(\bar{u}_\varepsilon-u_\varepsilon)
\end{equation}
and so
\begin{eqnarray*}
I_1 &=& \displaystyle{\int_\Omega}\sigma_\varepsilon\cdot\nabla\big(\varphi(u_\varepsilon-\bar{u}_\varepsilon)\big)-\displaystyle{\int_\Omega}(u_\varepsilon-\bar{u}_\varepsilon)\sigma_\varepsilon\cdot\nabla\varphi\nonumber\\ &\leq&\int_\Omega f\varphi(u_\varepsilon-\bar{u}_\varepsilon)-\displaystyle{\int_\Omega}(u_\varepsilon-\bar{u}_\varepsilon)\sigma_\varepsilon\cdot\nabla\varphi.
\end{eqnarray*}
Since $u_\varepsilon$ and $\bar{u}_\varepsilon$ converge to $u^*$ weakly in $W_0^{1,\alpha}(\Omega)$ (and strongly in $L^{\alpha}(\Omega)$), we obtain
\begin{equation}\label{3.15}
\limsup_{\varepsilon\rightarrow0} I_1\leq0.
\end{equation}
As we know from \eqref{3.12}, $\bar{u}_\varepsilon\rightarrow u^*$ in $W_0^{1,\alpha}(\Omega)$, which gives
\begin{equation}\label{3.16}
\lim_{\varepsilon\rightarrow0}I_2=\int_\Omega\varphi\sigma^*\cdot\nabla u^*.
\end{equation}
Note that
$$
I_3=-\int_\Omega\sigma_\varepsilon\cdot\nabla\left(\varphi w_\varepsilon\right)+\int_\Omega w_\varepsilon\sigma_\varepsilon\cdot\nabla\varphi.
$$
From \eqref{3.7} and \eqref{3.12}, we pass to the limit in the first term of $I_3$. Using \eqref{3.3} and \eqref{3.12}, we pass to the limit also in the second term of $I_3$, arriving at
\begin{equation}\label{3.17}
\lim_{\varepsilon\rightarrow0}I_3=-\int_\Omega\varphi\sigma^*\nabla w_0.
\end{equation}
Observe that
$$
I_4=-\int_\Omega a_\varepsilon(x,\nabla w_\varepsilon)\cdot\nabla\left(\varphi(u_\varepsilon-w_\varepsilon)\right)+\int_\Omega(u_\varepsilon-w_\varepsilon)a_\varepsilon(x,\nabla w_\varepsilon)\cdot\nabla\varphi,
$$
and recalling \eqref{3.2} and \eqref{3.9} and passing to the limit we obtain
\begin{equation}\label{3.18}
\lim_{\varepsilon\rightarrow0}I_4=-\int_\Omega\varphi a_0(\nabla w_0)\cdot\nabla(u^*-w_0).
\end{equation}
Combining \eqref{3.13}, \eqref{3.15}-\eqref{3.18}, one has
\begin{equation}\label{3.19}
\int_\Omega\varphi(\sigma^*-a_0(\nabla w_0))\cdot\nabla(u^*-w_0)\geq0\,\,\,\textrm{ for }w_0\in\mathcal{D}(\Omega).
\end{equation}
By density, \eqref{3.19} is true also for any $w_0$ in $W_0^{1,\alpha}(\Omega)$. Consider $w_0=u^*+t\varphi$, with $t\geq0$ and $\varphi\in W_0^{1,\alpha}(\Omega)$. Letting $t\rightarrow0$ and using Minty's trick as in \cite[page 94]{BM91} (see also \cite{KS00}), we conclude
\begin{equation}\label{3.20}
\sigma^*=a_0(\nabla u^*).
\end{equation}
\smallskip
\noindent\textbf{Step 4} (\textit{Lower semicontinuity of the energy}). From \eqref{3.11} and \eqref{3.13} one has
\begin{eqnarray*}
\int_\Omega\varphi\sigma_\varepsilon\cdot\nabla u_\varepsilon&\ge& \displaystyle{\int_\Omega}\varphi\sigma_\varepsilon\cdot\nabla w_\varepsilon+\displaystyle{\int_\Omega}\varphi a_\varepsilon(x,\nabla w_\varepsilon)\cdot\nabla(u_\varepsilon-w_\varepsilon)\nonumber\\
&=&-I_3-I_4.
\end{eqnarray*}
From \eqref{3.17}, \eqref{3.18} and \eqref{3.20} for any $w_0\in\mathcal{D}(\Omega)$ we have
\begin{eqnarray}\label{3.21}
&&\liminf_{\varepsilon\rightarrow0}\int_\Omega\varphi\sigma_\varepsilon\cdot\nabla u_\varepsilon\\
&\geq& \int_\Omega\varphi a_0(\nabla u^*)\cdot\nabla w_0
+\int_\Omega\varphi a_0(\nabla w_0)\cdot\nabla(u^*-w_0)\nonumber.
\end{eqnarray}
Letting $w_0$ go to $u^*$ in $W_0^{1,\alpha}(\Omega)$, one gets from \eqref{3.21}
\begin{equation}\label{3.22}
\liminf_{\varepsilon\rightarrow0}\int_\Omega\varphi\sigma_\varepsilon\cdot\nabla u_\varepsilon\geq\int\varphi a_0(\nabla u^*)\cdot\nabla u^*,
\end{equation}
$\forall\varphi\in\mathcal{D}(\Omega)$ such that $0\leq\varphi\leq1$.

\bigskip

\noindent\textbf{Step 5}. Finally, we claim that $u^*$ is the unique solution $u_0$ of \eqref{1.6}.

Let $v_0\in K_0$ and since $K_\varepsilon\rightarrow K_0$ in the Mosco sense, then there is a sequence $\bar{v}_\varepsilon\in K_\varepsilon$ such that $\bar{v}_\varepsilon\rightarrow v_0$ in $W_0^{1,\alpha}(\Omega)$. Using $\bar{v}_\varepsilon$ as a test function in \eqref{1.5} for $\varphi\in\mathcal{D}(\Omega)$, $0\leq\varphi\leq1$, one gets
\begin{equation}\label{3.23}
\int_\Omega\sigma_\varepsilon\cdot\nabla\bar{v}_\varepsilon-\int_\Omega f(\bar{v}_\varepsilon-u_\varepsilon)\geq\int_\Omega\sigma_\varepsilon\cdot\nabla u_\varepsilon\geq\int_\Omega\varphi(\sigma_\varepsilon\cdot\nabla u_\varepsilon).
\end{equation}
Recalling \eqref{3.22} and passing to the limit in $\varepsilon$ in \eqref{3.23}, we obtain
$$
\int_\Omega a_0(\nabla u^*)\cdot\nabla v_0-\int_\Omega f(v_0-u^*)\geq\int_\Omega\varphi a_0(\nabla u^*)\cdot\nabla u^*.
$$
Letting $\varphi\rightarrow1$ in the last inequality, one gets
$$
\int_\Omega a_0(\nabla u^*)\cdot\nabla(v_0-u^*)-\int_\Omega f(v_0-u^*)\geq0,\,\,\forall v_0\in K_0.
$$
The latter, combined with \eqref{3.6}, allow us to conclude that $u^*$ coincides with the unique solution $u_0$ of \eqref{1.6} and the whole sequence $u_\varepsilon\rightharpoonup u_0$ in $W_0^{1,\alpha}(\Omega)$.
\end{proof}
\begin{remark}\label{r3.1}
One can also show the convergence of the energies. More precisely,
\begin{equation}\label{3.24}
\int_\Omega a_\varepsilon(x,\nabla u_\varepsilon)\cdot\nabla u_\varepsilon\,dx\rightarrow\int_\Omega a_0(\nabla u_0)\cdot\nabla u_0\,dx.
\end{equation}
\end{remark}
\begin{proof}
For any $\varphi\in\mathcal{D}(\Omega)$ such that $0\leq\varphi\leq1$ from \eqref{3.14} we have
$$
\int_\Omega\varphi\sigma_\varepsilon\cdot\nabla u_\varepsilon\leq\int_\Omega\sigma_\varepsilon\cdot\nabla(\varphi\bar{u}_\varepsilon)-\int_\Omega u_\varepsilon\sigma_\varepsilon\cdot\nabla\varphi-\int_\Omega f\varphi(\bar{u}_\varepsilon-u_\varepsilon),
$$
which gives
\begin{equation}\label{3.25}
\limsup_{\varepsilon\rightarrow0}\int_\Omega\varphi\sigma_\varepsilon\cdot\nabla u_\varepsilon\leq\int_\Omega a_0(\nabla u_0)\cdot\nabla u_0.
\end{equation}
The latter, combined with \eqref{3.22}, implies
$$
\sigma_\varepsilon\cdot\nabla u_\varepsilon\rightarrow a_0(\nabla u_0)\cdot\nabla u_0\,\,\,\textrm{ in }\mathcal{D}'(\Omega).
$$
Since $K_\varepsilon\rightarrow K_0$ in the Mosco sense, then taking $v_0=u_0$ in \eqref{3.23}, we get
\begin{eqnarray*}
 \int_\Omega a_0(\nabla u_0)\cdot\nabla u_0&\geq&\limsup_{\varepsilon\rightarrow0}\int_\Omega\sigma_\varepsilon\cdot\nabla u_\varepsilon\\
 &\ge& \liminf_{\varepsilon\rightarrow0}\int_\Omega\sigma_\varepsilon\cdot\nabla u_\varepsilon\\
 &\geq&\int_\Omega\varphi a_0(\nabla u_0)\cdot\nabla u_0,
\end{eqnarray*}
and letting $\varphi\rightarrow1$, we obtain \eqref{3.24}.
\end{proof}
\begin{remark}\label{r3.2}
If in \eqref{1.5} we have $f_\varepsilon$ instead of $f$ and $f_\varepsilon\rightharpoonup f$ in $L^s(\Omega)$, then the conclusion of the Theorem \ref{t3.1} still holds.
\end{remark}
\begin{remark}\label{r3.3}
Since there are Lewy-Stampacchia inequalities also for the two obstacles problem (see \cite{RT11}), the Theorem \ref{t3.1} can be extended for two obstacles problems with similar assumptions.
\end{remark}
\section{Convergence of the coincidence sets}\label{s4}
In this section, using the Lewy-Stampacchia inequalities, we prove a stability result for the coincidence sets as it was done, for example, in Theorem 6:6.1 in \cite{R91}.
\begin{theorem}\label{t4.1}
Let the conditions of Theorem \ref{t3.1} hold. If, as $\varepsilon\rightarrow0$,
\begin{equation}\label{4.2}
u_\varepsilon-\psi_\varepsilon\rightarrow u_0-\psi_0\,\,\textrm{ in }\,\,L^1(\Omega),
\end{equation}
\begin{equation}\label{4.3}
(A_\varepsilon\psi_\varepsilon-f)^+\rightarrow(A_0\psi_0-f)^+\,\,\textrm{ in }\,\,L^1(\Omega),
\end{equation}
\begin{equation}\label{4.4}
A_\varepsilon u_\varepsilon\rightarrow A_0u_0\,\,\textrm{ in }\,\,\mathcal{D}'(\Omega),
\end{equation}
\begin{equation}\label{4.5}
\int_Sd(A_0\psi_0-f)\neq0,\,\,\forall S\subset\Omega\,\,\textrm{ such that }\,\,|S|>0,
\end{equation}
and
\begin{equation}\label{4.6}
A_0u_0-f=(A_0\psi_0-f)\chi_0\,\,\textrm{ a.e. in }\,\,\Omega,
\end{equation}
where $\chi_0$ is the characteristic function of the set $I_0:=\{u_0=\psi_0\}$, then the coincidence sets $I_\varepsilon:=\{u_\varepsilon=\psi_\varepsilon\}$ converge in measure, i.e.,
$$
\chi_\varepsilon\rightarrow\chi_0\,\,\textrm{ in }\,\,L^p(\Omega),\,\forall p\in[1,\infty),
$$
where $\displaystyle{\chi_\varepsilon}$ is the characteristic function of $I_\varepsilon$.
\end{theorem}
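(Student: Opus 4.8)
The plan is to adapt the Lewy--Stampacchia strategy of Theorem~\ref{t3.1}, this time applied to the ``reaction'' $\mu_\varepsilon:=A_\varepsilon u_\varepsilon-f$, and to identify the weak-$*$ limit in $L^\infty(\Omega)$ of a subsequence of $\{\chi_\varepsilon\}$, say $\chi^*$ with $0\le\chi^*\le1$. The goal is to show that $\chi^*=\chi_0$; since $\chi_\varepsilon$ and $\chi_0$ are $\{0,1\}$-valued, uniqueness of the limit then promotes this weak-$*$ convergence to strong convergence in every $L^p(\Omega)$, $p<\infty$, because $|\chi_\varepsilon-\chi_0|^p=|\chi_\varepsilon-\chi_0|$.

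First I would record the bounds on $\mu_\varepsilon$. Exactly as in Step~2 of the proof of Theorem~\ref{t3.1}, the Lewy--Stampacchia inequalities \cite{RT11} give $0\le\mu_\varepsilon\le(A_\varepsilon\psi_\varepsilon-f)^+$ a.e.\ in $\Omega$, so by \eqref{1.41} the family $\{\mu_\varepsilon\}$ is bounded in $L^s(\Omega)$ and, by \eqref{4.4}, $\mu_\varepsilon\rightharpoonup\mu_0:=A_0u_0-f$ in $L^s(\Omega)$, with $\mu_0\ge0$. The complementarity relations for the obstacle problem ($\mu_\varepsilon\ge0$ and $\mu_\varepsilon=0$ a.e.\ on $\{u_\varepsilon>\psi_\varepsilon\}$) force $\mu_\varepsilon=\mu_\varepsilon\chi_\varepsilon$ a.e., whence
\[
0\le\mu_\varepsilon\le(A_\varepsilon\psi_\varepsilon-f)^+\chi_\varepsilon\qquad\text{a.e.\ in }\Omega .
\]
Passing to the limit along a subsequence with $\chi_\varepsilon\rightharpoonup\chi^*$ weak-$*$ in $L^\infty(\Omega)$, using that $(A_\varepsilon\psi_\varepsilon-f)^+\to(A_0\psi_0-f)^+$ in $L^1(\Omega)$ by \eqref{4.3} (so the product of an $L^1$-strongly convergent and an $L^\infty$-weak-$*$ convergent sequence converges in $\mathcal D'(\Omega)$), and invoking \eqref{4.6}, one obtains
\[
0\le\mu_0=(A_0\psi_0-f)\chi_0\le(A_0\psi_0-f)^+\chi^*\qquad\text{a.e.\ in }\Omega .
\]

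Next I would localize, on $I_0$ and on its complement. On $I_0$ the chain above gives $\mu_0=A_0\psi_0-f\ge0$, hence $(A_0\psi_0-f)^+=\mu_0$ there, and the non-degeneracy \eqref{4.5} upgrades $\mu_0\ge0$ to $\mu_0>0$ a.e.\ on $I_0$ (a positive-measure null set of the reaction would contradict \eqref{4.5}); the displayed inequality then forces $\chi^*=1$ a.e.\ on $I_0$. On $\Omega\setminus I_0=\{u_0>\psi_0\}$ I would instead use \eqref{4.2}: since $u_\varepsilon\equiv\psi_\varepsilon$ on $I_\varepsilon$, for each $\delta>0$ one has $\{\chi_\varepsilon=1\}\cap\{u_0-\psi_0>\delta\}\subset\{|(u_\varepsilon-\psi_\varepsilon)-(u_0-\psi_0)|>\delta\}$, whose Lebesgue measure tends to $0$; testing $\chi_\varepsilon\rightharpoonup\chi^*$ against the indicator of $\{u_0-\psi_0>\delta\}$ gives $\chi^*=0$ a.e.\ there, and letting $\delta\downarrow0$ yields $\chi^*=0$ a.e.\ on $\Omega\setminus I_0$.

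Combining the two localizations gives $\chi^*=\chi_0$, independently of the extracted subsequence, so the whole sequence satisfies $\chi_\varepsilon\rightharpoonup\chi_0$ weak-$*$ in $L^\infty(\Omega)$ and, in particular, $|I_\varepsilon|=\int_\Omega\chi_\varepsilon\to\int_\Omega\chi_0=|I_0|$; this convergence of $L^2$ norms together with weak $L^2(\Omega)$ convergence gives $\chi_\varepsilon\to\chi_0$ in $L^2(\Omega)$, and the identity $|\chi_\varepsilon-\chi_0|^p=|\chi_\varepsilon-\chi_0|$ finishes the proof for every $p\in[1,\infty)$. The main obstacle is largely conceptual and concentrated in the second and third paragraphs and in the use of \eqref{4.5}--\eqref{4.6}: these hypotheses encode that the limit reaction does not degenerate on its own coincidence set, which is exactly what rules out a ``mushy'' region on which $\chi^*$ would take strictly intermediate values and thereby fail to be a characteristic function; the accompanying technical points are the justification of the $L^1$-times-$L^\infty$ product limit and the passage of the pointwise inequality $\mu_\varepsilon\le(A_\varepsilon\psi_\varepsilon-f)^+\chi_\varepsilon$ through the weak limits.
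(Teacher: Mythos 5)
Your proposal is correct and follows essentially the same route as the paper: the Lewy--Stampacchia bound combined with the complementarity relation localizes the reaction on the coincidence set, a weak-$*$ limit $\chi^*$ of $\chi_\varepsilon$ is pinned down to equal $\chi_0$ using the non-degeneracy \eqref{4.5} and the identity \eqref{4.6} on $I_0$ and the $L^1$-convergence \eqref{4.2} off $I_0$, and then the $\{0,1\}$-valuedness upgrades weak-$*$ to strong $L^p$ convergence. The only cosmetic differences are that the paper encodes the bound via an explicit density $q_\varepsilon$ with $A_\varepsilon u_\varepsilon-f=q_\varepsilon(A_\varepsilon\psi_\varepsilon-f)^+$ and $0\le q_\varepsilon\le\chi_\varepsilon\le1$, whereas you carry the inequality $\mu_\varepsilon\le(A_\varepsilon\psi_\varepsilon-f)^+\chi_\varepsilon$ directly, and the paper derives $\chi^*=0$ on $\{u_0>\psi_0\}$ from the integral identity $0=\int_\Omega\chi_\varepsilon(u_\varepsilon-\psi_\varepsilon)\to\int_\Omega\chi_*(u_0-\psi_0)$ rather than by a convergence-in-measure argument.
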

\begin{proof}
From the Lewy-Stampacchia inequalities we have
$$
f\leq A_\varepsilon u_\varepsilon\leq f+(A_\varepsilon\psi_\varepsilon-f)^+\,\,\textrm{ a.e. in }\,\,\Omega.
$$
Hence, there exists a function $q_\varepsilon\in L^\infty(\Omega)$, such that,
\begin{equation}\label{4.7}
A_\varepsilon u_\varepsilon-f=q_\varepsilon(A_\varepsilon\psi_\varepsilon-f)^+\,\,\textrm{ a.e. in }\,\,\Omega,
\end{equation}
and
\begin{equation}\label{4.8}
0\leq q_\varepsilon\leq\chi_\varepsilon\leq1\,\,\textrm{ a.e. in }\,\,\Omega.
\end{equation}
Then for a subsequence (still denoted by $\varepsilon$), one has
\begin{equation}\label{4.9}
q_\varepsilon\rightarrow q\,\,\textrm{ and }\,\,\chi_\varepsilon\rightharpoonup\chi_*\,\,\textrm{ in }\,\,L^\infty(\Omega)-\textrm{weak*}
\end{equation}
for functions $q$, $\chi_*\in L^\infty(\Omega)$. The inequalities \eqref{4.8} imply
\begin{equation}\label{4.10}
0\leq q\leq\chi_*\leq1\,\,\textrm{ a.e. in }\,\,\Omega.
\end{equation}
Using \eqref{4.3}, \eqref{4.4} and \eqref{4.9}, we pass to the limit, as $\varepsilon\rightarrow0$, in \eqref{4.7} and obtain
$$
A_0u_0-f=q(A_0\psi_0-f)^+\,\,\textrm{ a.e. in }\,\,\Omega.
$$
The latter, combined with \eqref{4.6} provides
\begin{equation}\label{4.11}
q(A_0\psi_0-f)^+=(A_0\psi_0-f)\chi_0\,\,\textrm{ a.e. in }\,\,\Omega.
\end{equation}
Note that in the region $\{A_0\psi_0>f\}$, \eqref{4.11} and \eqref{4.5} imply that $q=\chi_0$, while in $\{A_0\psi_0\leq f\}$, $\chi_0=0$. Therefore, $q\geq\chi_0$ a.e. in $\Omega$. Consequently, from \eqref{4.10} we get
$$
\chi_0\leq\chi_*\,\,\textrm{ a.e. in }\,\,\Omega.
$$
On the other hand, from \eqref{4.2} and \eqref{4.9} one has
$$
0=\int_\Omega\chi_\varepsilon(u_\varepsilon-\psi_\varepsilon)\rightarrow\int_\Omega\chi_*(u_0-\psi_0)=0,
$$
thus $\displaystyle{\chi_*}(u_0-\psi_0)=0$ a.e. in $\Omega$. Consequently, if $u_0>\psi_0$, then $\displaystyle{\chi_*}=0$, and since $0\leq\displaystyle{\chi_*}\leq1$, one obtains
$$
\chi_0\geq\chi_*\,\,\textrm{ a.e. in }\,\,\Omega.
$$
Therefore, $\displaystyle{\chi_0}=\displaystyle{\chi_*}$, and the whole sequence $\displaystyle{\chi_\varepsilon}$ converges to ${\chi_0}$ as $\varepsilon\rightarrow0$, first weakly, and since they are characteristic functions, also strongly in any $L^p(\Omega)$, for any $p\in[1,\infty)$.
\end{proof}
\begin{remark}\label{r4.1}
If $\psi_0=0$ and the right hand side is regular enough, the condition \eqref{4.6} holds automatically, since in this particular case one has porosity of the free boundary from \cite{CLRT14} (hence, the free boundary has Lebesgue measure zero), which provides \eqref{4.6}.
\end{remark}
\begin{remark}\label{r4.2}
The assumption \eqref{4.5} is a weaker version of the condition
$$
A_0\psi_0-f\neq0\,\,\textrm{ a.e. in }\,\,\Omega,\,\,\textrm{ when }\,\,A_0\psi_0\in L^1(\Omega).
$$
\end{remark}
\begin{theorem}\label{t4.2}
Let the conditions of Theorem \ref{t3.1} and also $s>n/2$. If $\psi_\varepsilon\rightarrow\psi_0$, uniformly, $\psi_0\big|_{\partial\Omega}<0$ and 
$$
\overline{\textrm{int}\{u_0=\psi_0\}}=\{u_0=\psi_0\}=I_0,
$$
then the coincidence sets $I_\varepsilon:=\{u_\varepsilon=\psi_\varepsilon\}$ converge in the Hausdorff distance to $I_0$.
\end{theorem}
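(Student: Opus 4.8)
The plan is to establish the two one-sided inclusions whose conjunction is Hausdorff convergence: an \emph{upper} bound, $I_\varepsilon\subset(I_0)_\eta$ for every $\eta>0$ and all small $\varepsilon$, and a \emph{lower} bound, $I_0\subset(I_\varepsilon)_\eta$ likewise, where $(I_0)_\eta:=\{x:\dist(x,I_0)<\eta\}$ is the open $\eta$-neighbourhood. The engine for everything is the promotion of the $W_0^{1,\alpha}(\Omega)$-convergence of Theorem \ref{t3.1} to \emph{uniform} convergence, which is where $s>n/2$ is used. Indeed, the Lewy--Stampacchia inequalities $f\le A_\varepsilon u_\varepsilon\le f+(A_\varepsilon\psi_\varepsilon-f)^+$ together with \eqref{1.41} make the right-hand side of the equation solved by $u_\varepsilon$ bounded in $L^s(\Omega)$ uniformly in $\varepsilon$; since $s>n/2$, the regularity theory for $-\Div(a_\varepsilon(x,\nabla u_\varepsilon))=g_\varepsilon$ with $g_\varepsilon$ bounded in $L^s$ provides an $\varepsilon$-independent modulus of continuity (interior, and up to $\partial\Omega$ by \eqref{1.42} and barriers), hence, with the uniform $W_0^{1,\alpha}$-bound and zero boundary values, precompactness of $\{u_\varepsilon\}$ in $C(\overline\Omega)$; the only possible limit is $u_0$. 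Adding $\psi_\varepsilon\to\psi_0$ uniformly, we obtain $u_\varepsilon-\psi_\varepsilon\to u_0-\psi_0$ in $C(\overline\Omega)$, so the gap functions are continuous and nonnegative. Since $\psi_0|_{\partial\Omega}<0$ while $u_0,u_\varepsilon$ vanish on $\partial\Omega$, continuity gives $u_0-\psi_0>0$ near $\partial\Omega$, and likewise $u_\varepsilon-\psi_\varepsilon>0$ near $\partial\Omega$ for small $\varepsilon$; thus $I_0$ and (eventually) $I_\varepsilon$ are compact subsets of $\Omega$.

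\emph{Upper bound.} Fix $\eta>0$. On the compact set $\overline\Omega\setminus(I_0)_\eta$ the continuous function $u_0-\psi_0$ attains a positive minimum $m$, so by uniform convergence $u_\varepsilon-\psi_\varepsilon\ge m/2>0$ there for $\varepsilon$ small, which forces $I_\varepsilon\subset(I_0)_\eta$. This already shows $\sup_{x\in I_\varepsilon}\dist(x,I_0)\to0$.

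\emph{Lower bound.} This is where the stability hypothesis $I_0=\overline{\mathrm{int}\,I_0}$ is indispensable. Since $I_0$ is compact, it suffices, given $\eta>0$, to prove that whenever $\overline{B_r(y_0)}\subset\mathrm{int}\,I_0$ (with $r<\eta$) one has $I_\varepsilon\cap B_r(y_0)\ne\emptyset$ for all small $\varepsilon$; covering $I_0$ by finitely many such balls then places every point of $I_0$ within $\eta$ of $I_\varepsilon$. Argue by contradiction along a subsequence: $u_\varepsilon>\psi_\varepsilon$ throughout $B_r(y_0)$, so the nonnegative Lewy--Stampacchia measure $\mu_\varepsilon=A_\varepsilon u_\varepsilon-f$, supported on $I_\varepsilon$, vanishes on $B_r(y_0)$, i.e. $-\Div(a_\varepsilon(x,\nabla u_\varepsilon))=f$ in $\mathcal{D}'(B_r(y_0))$. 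Passing to the limit, using $a_\varepsilon(x,\nabla u_\varepsilon)\weakly a_0(\nabla u_0)$ in $\left(L^{\beta'}(\Omega)\right)^n$ from \eqref{3.3} and \eqref{3.20} in the proof of Theorem \ref{t3.1}, gives $-\Div(a_0(\nabla u_0))=f$ in $\mathcal{D}'(B_r(y_0))$. But $u_0=\psi_0$ on $B_r(y_0)\subset\mathrm{int}\,I_0$, so $\nabla u_0=\nabla\psi_0$ there and hence $A_0\psi_0=f$ on $B_r(y_0)$. This must be excluded: it contradicts the non-degeneracy of $A_0\psi_0-f$ on the coincidence set, that is, the fact that (as in \eqref{4.5}--\eqref{4.6}) $A_0\psi_0-f$ cannot vanish on an open subset of $I_0$. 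The contradiction yields $I_\varepsilon\cap B_r(y_0)\ne\emptyset$, hence $\sup_{y\in I_0}\dist(y,I_\varepsilon)\to0$. Combining the two bounds, $I_\varepsilon\to I_0$ in the Hausdorff distance, and since every subsequence has the same limit, the whole family converges.

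\emph{Main obstacle.} The delicate point is the closing of the lower-bound argument: the contradiction hypothesis yields only the identity $A_0\psi_0=f$ on an interior ball of $I_0$, and ruling this out genuinely requires a strict-complementarity/non-degeneracy input for the homogenized problem (the analogue of \eqref{4.5}), which must either be read off from the structural assumptions or obtained from a strong maximum principle for $A_0$ on $\mathrm{int}\,I_0$ together with the sign condition $A_0\psi_0-f\ge0$ that holds there. A secondary, more technical difficulty — again tied to $s>n/2$ — is securing the $\varepsilon$-uniform modulus of continuity in the first step for the possibly degenerate or singular operators $a_\varepsilon$ without any regularity assumption on $p(\cdot)$.
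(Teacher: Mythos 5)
Your proposal is correct in outline and follows the same strategy the paper intends: use $s>n/2$ to upgrade the weak convergence of Theorem \ref{t3.1} to locally uniform convergence (the paper gets the $\varepsilon$-uniform H\"older modulus from \cite[Theorem 3.2]{FZ00}, applied exactly as you suggest, i.e.\ viewing $u_\varepsilon$ via Lewy--Stampacchia as a solution of an equation with right-hand side bounded in $L^s$), and then run the classical two-inclusion argument of \cite{CR81} and \cite[Theorem 6:6.5]{R91}. Your upper inclusion is the standard one and matches. The interesting divergence is in the lower inclusion. The cited classical proof does not argue by passing to the limit in the equation on a ball $B_r(y_0)\subset\mathrm{int}\,I_0$; it instead invokes the convergence in measure of the coincidence sets (Theorem \ref{t4.1} here): since $\chi_\varepsilon\to\chi_0$ in $L^1(\Omega)$, one has $|I_\varepsilon\cap B_r(y_0)|\to|B_r(y_0)|>0$, hence $I_\varepsilon\cap B_r(y_0)\neq\emptyset$ for small $\varepsilon$, and the covering argument finishes as in your text. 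This route is shorter and hides the non-degeneracy inside Theorem \ref{t4.1}, whereas your route exposes it: your contradiction argument lands on $A_0\psi_0=f$ on $B_r(y_0)$ and needs \eqref{4.5} (or \eqref{4.6} plus a strict-complementarity statement) to exclude it. You are right to single this out as the delicate point, and you should note that it is not an artefact of your proof: the statement of Theorem \ref{t4.2} does not literally list the hypotheses \eqref{4.2}--\eqref{4.6} of Theorem \ref{t4.1}, so the paper's citation of \cite[Theorem 6:6.5]{R91} implicitly assumes them (without some non-degeneracy of $A_0\psi_0-f$ on $\mathrm{int}\,I_0$, the lower inclusion can genuinely fail, as your limit-equation computation shows). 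Your secondary concern --- the $\varepsilon$-uniform interior modulus of continuity for merely measurable periodic $p(\cdot)$ --- is precisely what \cite{FZ00} is cited for, with constants depending only on $\alpha$, $\beta$ and the structural constants in \eqref{1.3}; the boundary behaviour is handled, as you indicate, by $\psi_0|_{\partial\Omega}<0$ keeping the coincidence sets uniformly away from $\partial\Omega$.
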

\begin{proof}
Using \cite[Theorem 3.2]{FZ00}, we obtain the uniform H\"older continuity of solutions. The uniform H\"{o}lder continuity of the obstacles then implies, as $\varepsilon\rightarrow0$, the convergence $u_\varepsilon\rightarrow u_0$, uniformly in compact subsets of $\Omega$. This, in turn, provides the convergence of the coincidence sets in Hausdorff distance as in \cite{CR81} and \cite[Theorem 6:6.5]{R91}.
\end{proof}

\bigskip
 
\noindent{\bf Acknowledgments.} This work was partially supported by FCT-Portugal grant\- SFRH/BPD/92717/2013.


\begin{thebibliography}{99}

\bibitem{AP83} H. Attouch and C. Picard, \textit{Variational inequalities with varying obstacles. The general form of the limit problem}, J. Funct. Anal. 50 (1983), 329-386.

\bibitem{B11} L. Boccardo, \textit{Lewy-Stampacchia inequality in quasilinear unilateral problems and applications to the $G$-convergence}, Boll. Unione Mat. Ital. (9) 4 (2011), 275-282.

\bibitem{BD75} L. Boccardo and I.C. Dolcetta, \textit{$G$-convergenza e problema di Dirichlet unilaterale}, Boll. Unione Mat. Ital. 4 (1975), 115-123.

\bibitem{BM82} L. Boccardo and F. Murat, \textit{Nouveaux r\'{e}sultas de convergence dans des prob\`{e}mes unilateraux}, Research Notes in Math. 60 (1982), 64-85.

\bibitem{BM91} L. Boccardo and F. Murat, \textit{Homogenization of nonlinear unilateral problems}, Composite media and homogenization theory; an international center for theoretical physics workshop, Trieste, Italy, January 1990 (1991), 81-105.

\bibitem{CD99} D. Cioranesco and P. Donato, \textit{An introduction to homogenization}, Oxford lecture series in Mathematics and its applications 17, Oxford University Press Inc., New York, 1999.

\bibitem{CDD90} V.Chiado Piat, G. Dal Maso and A. Defranceschi, \textit{$G$-convergence of monotone operators}, Ann. Inst. Henri Poincar\'{e} 7 (1990), 123-160.

\bibitem{CR81} M. Codegone and J.F. Rodrigues, \textit{Convergence of the coincidence set in the homogenization of the obstacle problem}, Ann. Fac. Sci. Toulouse Math. (5) 3 (1981), 275-285.

\bibitem{CLRT14} S. Challal, A. Lyaghfouri, J.F. Rodrigues and R. Teymurazyan, \textit{On the regularity of the free boundary for quasilinear obstacle problems}, Interfaces Free Bound. 16 (2014), 359-394. 

\bibitem{DHHR11} L. Diening, P. Harjulehto, P. H\"{a}st\"{o} and M. Ruzicka, \textit{Lebesgue and Sobolev spaces with variable exponents}, Lecture Notes in Mathematics 2017 (2011).

\bibitem{E90} L.C. Evans, \textit{Weak convergence methods for nonlinear partial differential equations}, American Mathematical Society, 1990.

\bibitem{FZ03} X. Fan and Q. Zhang, \textit{Existence of solutions for $p(x)$-Laplacian Dirichlet problem}, Nonlinear Anal., Theory
Methods Appl. 52 (2003), 1843-1852.

\bibitem{FZ00} X. Fan and D. Zhao, \textit{The quasi-minimizer of integral functionals with $m(x)$ growth condition}, Nonlinear Anal. 39 (2000), 807-816.

\bibitem{FZ01} X. Fan and D. Zhao, \textit{On the spaces $L^{p(x)}(\Omega)$ and $W^{m,p(x)}(\Omega)$}, J. Math. Anal. Appl. 263 (2001), 424-446.

\bibitem{KS00} D. Kinderlehrer and G. Stampacchia, \textit{An introduction to variational inequalities and their applications}, SIAM Classics in Applied Mathematics 31, 2000, xxii+306p.

\bibitem{M78} F. Murat, \textit{Compacit\'{e} par compensation}, Ann. Scuola Norm. Sup. Pisa. Cl. Sci (4) 5 (1978), 489-507.

\bibitem{P97} A.A. Pankov, \textit{$G$-convergence and homogenization of nonlinear partial differential operators}, Kluwer Academic Publishers, Dordrecht, 1997.

\bibitem{R91} J.F. Rodrigues, \textit{Obstacle problems in mathematical physics}, North-Holland Mathematics Studies 134 (Notas de Matematica 114), Elsevier Science Publishers B.V., 1991.

\bibitem{RT11} J.F. Rodrigues and R. Teymurazyan, \textit{On the two obstacles problem in Orlicz-Sobolev spaces and applications}, Complex Var. Elliptic Equ. 56 (2011), 769-787. 

\bibitem{S05} G.V. Sandrakov, \textit{Homogenization of variational inequalities for obstacle problems}, Sbornik, Mathematics 196 (2005), 541-560.

\bibitem{S06} G.V. Sandrakov, \textit{Homogenization of nonlinear equations and variational inequalities with obstacles}, Doklady Mathematics 73 (2006), 178-181.

\bibitem{Z91} V.V. Zhikov, \textit{Lavrent'ev effect and the homogenization of nonlinear variational problems}, Differ. Equations 27 (1991), 32-39. 

\bibitem{ZP101} V.V. Zhikov and S.E. Pastukhova, \textit{On the compensated compactness principle}, Doklady Mathematics 82 (2010), 590-595.

\bibitem{ZP102} V.V. Zhikov and S.E. Pastukhova, \textit{Homogenization of monotone operators under conditions of coercivity and growth of variable order}, Mathematical Notes 90 (2011), 48-63.

\end{thebibliography}
\end{document}